\documentclass{article}
\usepackage{amsthm}
\usepackage{amsmath,amssymb}
\usepackage{helvet}
\usepackage{docmute}
\usepackage{tcolorbox}
\usepackage[enableskew]{youngtab}
\usepackage{pxrubrica}
\usepackage{color}
\usepackage{tikz}
\usepackage{ytableau}
\usepackage{braket}
\usepackage{autobreak}
\usepackage{mathtools}
\usepackage{hyperref}
\usepackage{endnotes}
\usepackage{authblk}
\hypersetup{%
 setpagesize=false,%
 bookmarks=true,%
 bookmarksdepth=tocdepth,%
 bookmarksnumbered=true,%
 colorlinks=false,%
 pdftitle={},%
 pdfsubject={},%
 pdfauthor={},%
 pdfkeywords={}}
\theoremstyle{definition}
\newtheorem{dfn}{Definition}[section]
\newtheorem{thm}{Theorem}[section]

\newtheorem{prop}{Proposition}[section]
\newtheorem{cor}{Corollary}[section]

\theoremstyle{definition}
\newtheorem{case}{Example}[section]
\newtheorem*{prop*}{proposition}
\numberwithin{equation}{section}

\newtheorem{rem}{Remark}[section]
\theoremstyle{plain}


\hyphenpenalty=10000\relax
\exhyphenpenalty=10000\relax
\sloppy
\allowdisplaybreaks

\makeatletter
\renewcommand{\AB@affilsep}{\quad\protect\Affilfont}
\let\AB@affilsepx\AB@affilsep 

\makeatother

\begin{document}

\title{Special values of Grothendieck polynomials in terms of hypergeometric functions}

\author{Taikei Fujii, Takahiko Nobukawa and Tatsushi Shimazaki}

\date{\empty}

\maketitle

\begin{abstract}
We give some special values of Grothendieck polynomials and an explicit formula for the number of set-valued tableaux. For Young diagrams consisting of a single row or a single column, both the value and number are written by the Gauss' hypergeometric function ${}_2F_1$. For general Young diagrams, the Holman hypergeometric function $F^{(n)}$ is used to represent both the value and count. As an application, we derive a summation formula for $F^{(n)}$. 
\end{abstract}
\renewcommand{\thefootnote}{\fnsymbol{footnote}}
\footnote[0]{\noindent Keywords: set-valued tableau, Grothendieck polynomial, hypergeometric function.}
\footnote[0]{MSC 2020: 05A15, 05A17, 05E05, 33C70, 33C80.}
\renewcommand{\thefootnote}{\arabic{footnote}}

\section{Introduction}\label{I}
We consider $\lambda$ as a positive integer partition and identify it with the Young diagram. 
As well known, the Schur polynomials $s_\lambda$ can be expressed in terms of semi-standard tableaux. 
We use the notation ${\rm SST}(\lambda,n)$ to represent the set of semi-standard tableaux with shape $\lambda$ and consisting of $n$ variables. The value $|{\rm SST}(\lambda,n)|$ is calculated by the hook-length formula~\cite{[Mac95],[Nou23]}:
\begin{align}
|{\rm SST}(\lambda,n)| &= \prod_{1\leq i<j\leq n}\frac{\lambda_i-\lambda_j+j-i}{j-i} \label{h1} \\ 
&=\prod_{(i,j) \in \lambda}\frac{n+j-i}{h_{i,j}} \label{h2},
\end{align}
where $h_{i,j}$ represents the hook-length of each box $(i,j)$ in the Young diagram. 

Set-valued tableaux are one of the generalizations of semi-standard tableaux. These are introduced by Buch~\cite{[Buc02]} to express the tableaux sum formula for stable Grothendieck polynomials $G_\lambda$. Throughout this paper, since we only discuss stable Grothendieck polynomials $G_\lambda$, we refer to $G_\lambda$ as Grothendieck polynomials.
The Grothendieck polynomials were introduced by Lascoux and Sch\"{u}tzenberger~\cite{[Las90],[LS82]} as representatives for the structure sheaves of Schubert varieties in a flag variety.
These polynomials are generalizations of the Schur polynomials $s_\lambda$ in the $K$-theory of Grassmannians.

We denote the set of set-valued tableaux shape $\lambda$ having $n$ variables by ${\rm SVT}(\lambda,n)$, and we consider the number $|{\rm SVT}(\lambda,n)|$.
Unlike the case of $|{\rm SST}(\lambda,n)|$, there seems to be no factorized formula for $|{\rm SVT}(\lambda,n)|$ as~(\ref{h2}).
For example, we have
\begin{align*}
&|{\rm SVT}((2,1),3)|=27,\ |{\rm SVT}((2,2),3)|=13,\ |{\rm SVT}((4,3),3)|=103,\\
&|{\rm SVT}((2,1),4)|=159,\ |{\rm SVT}((2,2),4)|=97,\ |{\rm SVT}((4,3),4)|=1759.
\end{align*}
Therefore, we cannot expect a simple factorized formula. 
In this sense, counting $|{\rm SVT}(\lambda,n)|$ is not so easy and is an interesting problem. 
However, an explicit formula for the number $|{\rm SVT}(\lambda,n)|$ was unknown.

In this paper, we give some special values of Grothendieck polynomials.
Using these formulas, we derive an explicit formula for $|{\rm SVT}(\lambda,n)|$ that is similar to the structure of~(\ref{h1}). 
The explicit formula is rewritten by Holman's hypergeometric functions $F^{(n)}$. 
The functions $F^{(n)}$ were introduced by Holman~\cite{Hol80} which appear in the representation theory of the Lie group $U(n+1)$ or $SU(n+1)$.
For special $\lambda$, the number $|{\rm SVT}(\lambda,n)|$ is also given by the Gauss' hypergeometric functions ${}_2F_1$. For the definitions of $G_\lambda,\ {}_2F_1$, and $F^{(n)}$, see~(\ref{GG}),~(\ref{hgf}) and~(\ref{c2}), respectively.

Our main result is as follows: 

\begin{flushleft}
{\bf Main\ result}\ ({\bf Theorem~\ref{M2}}){\bf .}$\quad$ For any $\lambda \vdash l$, we have
\begin{align}\label{G_H}
&G_{\lambda}(1, 1, \dots, 1\mid \beta) \notag \\
&= |{\rm SST}(\lambda,n)|\cdot F^{(n)} \left(\begin{pmatrix}
  A_{12} &\ & & \\
  A_{13} & A_{23} & & \text{\Huge $0$} \\
  \vdots & \vdots &\ddots & \\
  A_{1n} & A_{2n} &\cdots & A_{n-1n}
\end{pmatrix} \middle| \begin{pmatrix}
  0 \\
  -1 \\
  \vdots \\
  -(n-1)
\end{pmatrix} \middle| \begin{pmatrix}
  1 \\
  1 \\
  \vdots \\
  1
\end{pmatrix} \middle| \begin{pmatrix}
  -\beta \\
  -\beta \\
  \vdots \\
  -\beta
\end{pmatrix} \right),
\end{align}
where $A_{ij}=\lambda_i+\lambda_j+j-i$.
\end{flushleft}
As a corollary of this result, we give an explicit formula for the number of set-valued tableaux (see also Corollary~\ref{cor2}).

The organization of this paper is as follows. In section~\ref{S2}, we give the definitions addressed in this paper. In section~\ref{S3}, we prove our main result. 
Additionally, we show that for Young diagrams consisting of a single row or a single column, the number of set-valued tableaux is written by Gauss' hypergeometric function ${}_2F_1$. As an application, we derive a summation formula for $F^{(n)}$. 

\section{Preliminaries}\label{S2}
In this section, we define the subjects addressed in this paper.

\subsection{Partitions and Young diagrams}
Let $l$ be a positive integer. A {\it partition} of $l$ is a weakly decreasing non-negative integer sequence
\begin{equation*}
\lambda = (\lambda_1,\lambda_2,\dots,\lambda_r),
\end{equation*} 
and satisfying
\begin{equation*}
\sum_{i=1}^r \lambda_i = l.
\end{equation*}
We denote a partition of $l$ as $\lambda \vdash l$. We consider $(\lambda_1,\dots,\lambda_r)$ and $(\lambda_1,\dots,\lambda_r,0)$ as equivalent. We define the {\it length} of $\lambda$ to be the smallest number $r$ such that $\lambda_r>0$ and $\lambda_{r+1}=0$. We write $r=\ell(\lambda)$. For $\lambda \vdash l$, the set 
\begin{align*}
\{ (i,j) \in (\mathbb{Z}_{>0})^2\mid1 \leq i \leq \ell(\lambda),\ 1\leq j \leq \lambda_i \}
\end{align*}
is called the {\it Young diagram of shape} $\lambda$. We use the notation that the Young diagram of shape $\lambda$ is obtained by arranging $l$ boxes with $\ell(\lambda)$ left-justified rows with the $i$-th row consisting of $\lambda_i$ boxes. We use the same method to describe the boxes of a Young diagram as matrices. This means that the first coordinate $i$ increases downwards, and the second coordinate $j$ increases from left to right. Since there is a one-to-one correspondence between a partition of $l$ and a Young diagram of shape $\lambda$, we consider them the same. The notation $|\lambda|$ represents the total number of boxes in a Young diagram and is calculated by summing up the number of boxes in each row.

\subsection{Semi-standard tableaux and Schur polynomials $s_\lambda$}
Let $\lambda \vdash l$ and $[n]=\{ 1,2,\dots,n \}$. \textit{Semi-standard tableaux of shape $\lambda$} are diagrams obtained by assigning a non-empty subset consisting of one element of $[n]$ from each $(i,j)$-th box of $\lambda$, satisfying the following conditions:
\begin{itemize}
\item The number in the box $(i,j+1)$ weakly increases more than the number in $(i,j)$.
\item The number in the box $(i+1,j)$ strictly increases more than the number in $(i,j)$.
\end{itemize}
We represent semi-standard tableaux of shape $\lambda$ by the notation ${\rm SST}(\lambda)$. When emphasizing the dependence on the number of variables $n$, we use the notation ${\rm SST}(\lambda, n)$. It is widely known that Schur polynomials $s_\lambda$ have the tableaux sum formula:
\begin{align*}
s_\lambda=s_\lambda(x_1.x_2,\dots,x_n)=\sum_{T\in {\rm SST}(\lambda,n)}x^{\omega(T)},
\end{align*}
where $x=(x_1,x_2,\dots,x_n)\in \mathbb{C}^n$, and $x^{\omega(T)}$ is defined as (\ref{mono}) below.

\subsection{Set-valued tableaux and Grothendieck polynomials $G_\lambda$}\label{SVT}
We denote $b_{i,j}$ as the box at the position $(i,j)$ in a Young diagram $\lambda$. Let $T_{i,j}$ represent a non-empty subset of $[n]$. 
\begin{dfn}~(\cite{[Buc02]})\label{svt}
For a Young diagram $\lambda$, the {\it set-valued semi-standard tableau of shape $\lambda$} is defined as a mapping from the set consisting of the whole boxes $b_{i,j}$ of $\lambda$ to the set consisting of the whole $T_{i,j}$ satisfying the following conditions:
\begin{itemize}
\item ${\rm max}T_{i,j} \leq {\rm min}T_{i,j+1}$.
\item ${\rm max}T_{i,j} < {\rm min}T_{i+1,j}$.
\end{itemize}
\end{dfn}
We call set-valued semi-standard tableaux plainly as set-valued tableaux. 
We write a set of set-valued tableaux of shape $\lambda$ as ${\rm SVT}(\lambda)$. If we emphasize the number of variables $n$, we denote ${\rm SVT}(\lambda,n)$. 
We only consider situations where ${\rm SVT}(\lambda,n)\neq \emptyset$. 
\begin{case}\label{ex2.1}
Let $\lambda = (2,1) \vdash 3$, $n=3$. We take a $T \in {\rm SVT}((2,1),3)$ as follows: \\
\begin{equation*}
{\raisebox{-5.5pt}[0pt][0pt]{$T$\ =\ }} {\raisebox{-2.5pt}[0pt][0pt]{\ytableaushort{{1}{1,\!2},{2,\!3}}}},
\end{equation*} \\ \\
where $\scriptsize{{\raisebox{-2.5pt}[0pt][0pt]{\ytableaushort{{2,\!3}}}}}$ means a subset $T_{2,1}=\{ 2,3 \}\subset [n]$ is assigned to the box $b_{2,1}$ of $\lambda$. 
We abbreviate $\scriptsize{{\raisebox{-3.0pt}[0pt][0pt]{\ytableaushort{{2,\!3}}}}}$ as $\scriptsize{{\raisebox{-3.0pt}[0pt][0pt]{\ytableaushort{{23}}}}}$ in this paper since we only use single-digit variables for set-valued tableaux. We write all tableaux $T\in{\rm SVT}((2,1),3)$ as follows:\\
\begin{align*}
&{\raisebox{-10pt}[0pt][0pt]{\ytableaushort{11,2}}}{\raisebox{-7pt}[0pt][0pt],\ } {\raisebox{-10pt}[0pt][0pt]{\ytableaushort{11,3}}}{\raisebox{-7pt}[0pt][0pt],\ }{\raisebox{-10pt}[0pt][0pt]{\ytableaushort{12,2}}}{\raisebox{-7pt}[0pt][0pt],\ }{\raisebox{-10pt}[0pt][0pt]{\ytableaushort{12,3}}}{\raisebox{-7pt}[0pt][0pt],\ }{\raisebox{-10pt}[0pt][0pt]{\ytableaushort{13,2}}}{\raisebox{-7pt}[0pt][0pt],\ }{\raisebox{-10pt}[0pt][0pt]{\ytableaushort{13,3}}}{\raisebox{-7pt}[0pt][0pt],\ }{\raisebox{-10pt}[0pt][0pt]{\ytableaushort{22,3}}}{\raisebox{-7pt}[0pt][0pt],\ }{\raisebox{-10pt}[0pt][0pt]{\ytableaushort{23,3}}}{\raisebox{-7pt}[0pt][0pt],\ }\\\\\\
&{\raisebox{-3pt}[0pt][0pt]{\ytableaushort{1{12},2}},\ }{\raisebox{-3pt}[0pt][0pt]{\ytableaushort{1{13},2}},\ }{\raisebox{-3pt}[0pt][0pt]{\ytableaushort{1{23},2}},\ } {\raisebox{-3pt}[0pt][0pt]{\ytableaushort{1{12},{3}}},\ }{\raisebox{-3pt}[0pt][0pt]{\ytableaushort{1{13},3}},\ }{\raisebox{-3pt}[0pt][0pt]{\ytableaushort{1{23},3}},\ }{\raisebox{-3pt}[0pt][0pt]{\ytableaushort{1{1},{23}}},\ }{\raisebox{-3pt}[0pt][0pt]{\ytableaushort{1{2},{23}}},\ }\\\\
&{\raisebox{-10pt}[0pt][0pt]{\ytableaushort{1{3},{23}}}}{\raisebox{-7pt}[0pt][0pt],\ }{\raisebox{-10pt}[0pt][0pt]{\ytableaushort{2{23},{3}}}}{\raisebox{-7pt}[0pt][0pt],\ }{\raisebox{-10pt}[0pt][0pt]{\ytableaushort{{12}2,3}}}{\raisebox{-7pt}[0pt][0pt],\ }{\raisebox{-10pt}[0pt][0pt]{\ytableaushort{{12}3,3}}}{\raisebox{-7pt}[0pt][0pt],\ }{\raisebox{-10pt}[0pt][0pt]{\ytableaushort{{1}{12},{23}}}}{\raisebox{-7pt}[0pt][0pt],\ }{\raisebox{-10pt}[0pt][0pt]{\ytableaushort{1{13},{23}}}}{\raisebox{-7pt}[0pt][0pt],\ }{\raisebox{-10pt}[0pt][0pt]{\ytableaushort{1{23},{23}}}}{\raisebox{-7pt}[0pt][0pt],\ }{\raisebox{-10pt}[0pt][0pt]{\ytableaushort{{12}{23},3}}}{\raisebox{-7pt}[0pt][0pt],\ }\\\\
&{\raisebox{-17pt}[0pt][0pt]{\ytableaushort{1{123},{2}}}}{\raisebox{-14pt}[0pt][0pt],\ }{\raisebox{-17pt}[0pt][0pt]{\ytableaushort{1{123},{3}}}}{\raisebox{-14pt}[0pt][0pt],\ }{\raisebox{-17pt}[0pt][0pt]{\ytableaushort{1{123},{23}}}}{\raisebox{-14pt}[0pt][0pt].\ }\\\\\\
\end{align*}
\end{case}

Also, we define the weight of $T$ as
\begin{equation}\label{wei}
\omega(T) = (\omega_1(T),\omega_2(T),\dots,\omega_n(T)) \in (\mathbb{Z}_{\geq 0})^n,
\end{equation}
where $\omega_i(T)$ denotes the number of $i$'s in $T$. Additionally, we define a monomial of $x=(x_1,x_2,\dots,x_n) \in \mathbb{C}^n$ having a weight $\omega(T)$ as
\begin{equation} \label{mono}
x^{\omega(T)} = x_1^{\omega_1(T)}x_2^{\omega_2(T)}\cdots x_n^{\omega_n(T)}.
\end{equation}
With these settings, following Buch~\cite{[Buc02]}, we define Grothendieck polynomials $G_\lambda$ as follows:
\begin{align}\label{GG}
G_{\lambda}=G_{\lambda}(x_1,x_2,\dots,x_n\mid\beta) = \sum_{T \in {\rm SVT}(\lambda,n)}\beta^{|T|-|\lambda|}x^{\omega(T)},
\end{align}
where $\beta$ is a parameter and $|T|$ is the number of assigned positive integers in $T$. 
Grothendieck polynomials $G_\lambda$ have the bi-alternant formula as follows~\cite{[IN13]} (see also~\cite{[Len00]}):
\begin{equation} \label{W}
G_\lambda = \frac{|x_i^{\lambda_j+n-j}(1+\beta x_i)^{j-1}|_{n \times n}}{\prod_{1\leq i < j \leq n}(x_i - x_j)}\eqqcolon\frac{p_\lambda}{\Delta_n}.
\end{equation}
Note that if it is necessary to add zeros to $\lambda$ such as
\begin{equation*}
\lambda = (\underbrace{\lambda_1,\lambda_2,\dots,\lambda_{\ell(\lambda)},0,\dots,0}_{n})\eqqcolon (\lambda_1,\dots,\lambda_n)
\end{equation*} 
for applying~(\ref{W}) to $G_\lambda$. We immediately obtain $s_\lambda$ by setting $\beta=0$ for~(\ref{GG}) or~(\ref{W}).

\section{Special values of Grothendieck polynomials in terms of hypergeometric functions}\label{S3}
\subsection{Special cases of Young diagrams}\label{Gauss}
In this subsection, we propose connections between the number of set-valued tableaux for Young diagrams with a single row or a single column and the hypergeometric function ${}_2F_1$. We represent the hypergeometric functions $_{2}F_{1}$ as follows:
\begin{align}\label{hgf}
_{2}F_{1} = {}_{2}F_{1}
\!\left( \hspace{-1mm} \begin{aligned}
& 
  \alpha,\ \beta \\
& 
   \ \ \gamma
\end{aligned} ; z \right) = \sum_{m=0}^\infty \frac{(\alpha)_m (\beta)_m}{(\gamma)_m}\frac{z^m}{m!}\quad (|z|<1), \\ \notag
\end{align}
where $(a)_m$ represents the Pochhammer symbol, defined as
\begin{align*}
(a)_m=a(a+1)(a+2)\cdots(a+m-1)\quad (a\in \mathbb{C},m \in \mathbb{Z}_{>0}).
\end{align*}
We adopt the convention $(a)_0=1$.
We find the following results.

\begin{prop}\label{p1}
For any $\lambda=(k)\ (k\in \mathbb{Z}_{>0})$, we have
\begin{align*}
G_{(k)}(1,1,\dots,1\mid \beta)&=\binom{n+k-1}{k}{}_{2}F_1\!\left( \hspace{-3mm} \begin{aligned}
& 
  \ \  k,\ 1-n \\
& 
   \ \ \ k+1
\end{aligned} ; -\beta \right).
\end{align*}
\end{prop}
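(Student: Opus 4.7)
The plan is to evaluate the tableaux sum formula~(\ref{GG}) at $x_1=\cdots=x_n=1$,
\[
G_{(k)}(1,\dots,1\mid\beta)=\sum_{m\geq 0}N_m\,\beta^m,\qquad N_m:=\#\{T\in{\rm SVT}((k),n):|T|=k+m\},
\]
and match the resulting series to the ${}_2F_1$ in the claim.

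To compute $N_m$, I would encode each $T=(T_1,\dots,T_k)$ with $|T|=k+m$ as a pair consisting of (i) the weakly increasing word $a_1\leq\cdots\leq a_{k+m}$ in $[n]$ obtained by listing the entries of each $T_j$ in increasing order and concatenating, together with (ii) a choice of $k-1$ ``dividers'' among the $k+m-1$ gaps recording the box boundaries. Since each $T_j$ is a set, a divider is forced at every gap where $a_i=a_{i+1}$; dividers at strict gaps are free. Stratifying by the number $p$ of distinct values used in the word gives
\[
N_m=\sum_{p}\binom{n}{p}\binom{k+m-1}{p-1}\binom{p-1}{m},
\]
where $\binom{n}{p}$ selects the values, $\binom{k+m-1}{p-1}$ selects their multiplicities (a composition of $k+m$ into $p$ positive parts), and $\binom{p-1}{m}$ places the $p-1-m$ free dividers among the $p-1$ strict gaps.

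The subset-of-subset identity $\binom{k+m-1}{p-1}\binom{p-1}{m}=\binom{k+m-1}{m}\binom{k-1}{p-1-m}$ together with a single Vandermonde convolution collapses the sum to
\[
N_m=\binom{k+m-1}{m}\binom{n+k-1}{k+m}=\binom{n+k-1}{k}\binom{n-1}{m}\cdot\frac{k}{k+m}.
\]
The standard rewrites $\binom{n-1}{m}=(-1)^m(1-n)_m/m!$ and $k/(k+m)=(k)_m/(k+1)_m$ then convert the series into $\binom{n+k-1}{k}\cdot{}_2F_1\!\left(k,1-n;k+1;-\beta\right)$, as claimed.

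The main obstacle I anticipate is justifying the encoding bijection and correctly bookkeeping the forced versus free dividers (in particular, checking the index range $p\geq m+1$ and that the construction is genuinely invertible); once $N_m$ is in closed form, the rest is routine algebra — one binomial identity, one Vandermonde summation, and two Pochhammer substitutions.
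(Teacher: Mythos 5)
Your proposal is correct, and the counting argument goes through: the word-plus-dividers encoding is a genuine bijection (within a box the entries are distinct, so every equal gap forces a divider; conversely any divider set containing all forced ones cuts the weakly increasing word into nonempty strictly increasing segments satisfying $\max T_j\leq\min T_{j+1}$), the stratified count $N_m=\sum_p\binom{n}{p}\binom{k+m-1}{p-1}\binom{p-1}{m}$ is right, and the subset-of-a-subset identity plus Vandermonde do give $N_m=\binom{k+m-1}{m}\binom{n+k-1}{k+m}=\binom{n+k-1}{k}\binom{n-1}{m}\frac{k}{k+m}$, after which the Pochhammer rewrites are the same as in the paper. However, your route is genuinely different from the paper's: the paper does not touch the tableau sum directly, but instead quotes Lenart's Schur expansion $G_{(k)}=\sum_{m=0}^{n-1}\beta^m s_{(k,1^m)}$ and evaluates the hook-shape Schur polynomials at $x_1=\cdots=x_n=1$, so the coefficient of $\beta^m$ there is $s_{(k,1^m)}(1,\dots,1)$, which of course equals your $N_m$. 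What your argument buys is self-containedness (no citation of the expansion into hooks) and, as a byproduct, an explicit product formula for the number of single-row set-valued tableaux with exactly $m$ extra entries --- in effect a bijective explanation, for a single row, of why hook Schur functions appear; what the paper's argument buys is brevity and uniformity with its proof of the column case $\lambda=(1^k)$, where the analogous expansion into elementary symmetric polynomials is used. If you write your version up, the only points needing care are exactly the ones you flag: stating the bijection and its inverse precisely, and noting that the constraint $p\geq m+1$ (equivalently $\binom{p-1}{m}=0$ otherwise) is what truncates the series at $m=n-1$, matching the factor $(1-n)_m$ in the ${}_2F_1$.
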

\begin{proof}
We applied the Schur expansion of $G_{(k)}$ in~\cite{[Len00]} (see also Remark 3.7 of~\cite{[MPS21]}), we have
\begin{align*}
G_{(k)} = \sum_{m=0}^{n-1}\beta^{m}s_{(k,1^m)}.
\end{align*}
By putting $x_1=x_2=\cdots=x_n=1$, we obtain
\begin{align*}
G_{(k)}(1,1,\dots,1\mid \beta)&=\sum_{m=0}^{n-1}\beta^{m}\frac{(n)_m(1-n)_m(-1)^m}{(1)_{k-1}(1)_m(k+m)}\\
&=\binom{n+k-1}{k}\sum_{m=0}^{n-1}\frac{(k)_m(1-n)_m}{(k+1)_m}\frac{(-\beta)^m}{m!}\\
&=\binom{n+k-1}{k}{}_{2}F_1\!\left( \hspace{-3mm} \begin{aligned}
& 
  \ \  k,\ 1-n \\
& 
   \ \ \ k+1
\end{aligned} ; -\beta \right).
\end{align*}
Hence, we have the result.
\end{proof}
\begin{prop}\label{p2}
For any $\lambda=(1^k)\ (k=1,2,\dots,n)$, we have
\begin{align*}
G_{(1^k)}(1,1,\dots,1\mid \beta)&=\binom{n}{k}{}_{2}F_1\!\left( \hspace{-3mm} \begin{aligned}
& 
  \ \  k,\ k-n \\
& 
   \ \ \ k+1
\end{aligned} ; -\beta \right).
\end{align*}
\end{prop}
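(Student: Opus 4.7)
The plan is to mirror the proof of Proposition~\ref{p1}, replacing the row shape $(k)$ by the column shape $(1^k)$. That proof proceeded in three steps: invoke a Schur expansion of the Grothendieck polynomial; specialize at $x_{1}=\cdots=x_{n}=1$ using a hook-length type identity; and rewrite the resulting terminating sum as a ${}_{2}F_{1}$. I will carry out exactly the same steps for $\lambda=(1^{k})$.

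The key input is the column analogue of Lenart's Schur expansion,
\[ G_{(1^k)} \;=\; \sum_{m=0}^{n-k} \binom{m+k-1}{k-1}\,\beta^{m}\,s_{(1^{k+m})}, \]
which can be read off from~\cite{[Len00]}. Specializing at $x_{i}=1$ and using $s_{(1^{r})}(1,\ldots,1)=\binom{n}{r}$ (from the hook-length formula~(\ref{h2}), or equivalently from $s_{(1^r)}=e_{r}$), I arrive at
\[ G_{(1^k)}(1,\ldots,1\mid\beta)\;=\;\sum_{m=0}^{n-k}\binom{m+k-1}{k-1}\binom{n}{k+m}\beta^{m}. \]
Should I wish to sidestep the Schur expansion, this same identity drops out of a direct bijection: an element of ${\rm SVT}((1^{k}),n)$ is a chain $T_{1},\ldots,T_{k}$ of nonempty subsets of $[n]$ with $\max T_{i}<\min T_{i+1}$, and one with $|T|=k+m$ is determined by its $(k+m)$-element support $T_{1}\cup\cdots\cup T_{k}\subseteq[n]$ together with a composition of $k+m$ into $k$ positive parts.

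It then remains to recognise the right-hand side as a hypergeometric series. Using $\binom{m+k-1}{k-1}=(k)_{m}/m!$ and $\binom{n}{k+m}=\binom{n}{k}\,(-1)^{m}(k-n)_{m}/(k+1)_{m}$, I factor $\binom{n}{k}$ out and identify the remaining factor as $\sum_{m\geq 0}\tfrac{(k)_{m}(k-n)_{m}}{(k+1)_{m}}\tfrac{(-\beta)^{m}}{m!}={}_{2}F_{1}(k,k-n;k+1;-\beta)$, the series terminating automatically because $(k-n)_{m}=0$ for $m>n-k$. The only step requiring any thought is justifying the column Schur expansion (or, equivalently, the bijective count above); once it is in hand, the Pochhammer manipulation is routine and is the exact analogue of the one in Proposition~\ref{p1}.
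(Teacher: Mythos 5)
Your proof is correct and takes essentially the same route as the paper: the paper invokes Lenart's expansion $G_{(1^k)}=\sum_{m=0}^{n-k}\beta^m\binom{m+k-1}{m}e_{m+k}$ (the displayed coefficient $\binom{n+k-1}{m}$ there is a typo, since the specialized line uses $\binom{m+k-1}{m}$, which agrees with your $\binom{m+k-1}{k-1}$), specializes at $x_1=\cdots=x_n=1$, and performs the same Pochhammer rewriting; your $s_{(1^{k+m})}$ is simply $e_{k+m}$. Your optional bijective justification of the key expansion at the principal specialization (a column set-valued tableau with $k+m$ entries is a $(k+m)$-element support together with a composition into $k$ positive parts) is also valid and makes that input self-contained.
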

\begin{proof}
Following~\cite{[Len00]}, the $G_{(1^k)}$ can be expanded by $(m+k)$-th elementary symmetric polynomials $e_{m+k}=e_{m+k}(x)$:
\begin{align*}
G_{(1^k)}=\sum_{m=0}^{n-k}\beta^m\binom{n+k-1}{m}e_{m+k}.
\end{align*}
Assigning $x_1=x_2=\cdots=x_n=1$ leads to the following:
\begin{align*}
G_{(1^k)}(1,1,\dots,1\mid \beta)&=\sum_{m=0}^{n-k}\beta^m\binom{m+k-1}{m}\binom{n}{m+k}\\
&=\binom{n}{k}\sum_{m=0}^{n-k}\frac{(k)_m(k-n)_m}{(k+1)_m}\frac{(-\beta)^m}{m!}\\
&=\binom{n}{k}{}_{2}F_1\!\left( \hspace{-3mm} \begin{aligned}
& 
  \ \  k,\ k-n \\
& 
   \ \ \ k+1
\end{aligned} ; -\beta \right).
\end{align*}
Consequently, we obtain the result.
\end{proof}
As corollaries of these propositions, we immediately derive the following results.

\begin{cor}\label{sr}
For any $\lambda= (k)\ (k\in \mathbb{Z}_{\geq1})$, we have
\begin{align*}
|{\rm SVT}((k),n)| &=
\binom{n+k-1}{k}{}_{2}F_{1}
\!\left( \hspace{-3mm} \begin{aligned}
& 
  \ \  k,\ 1-n \\
& 
   \ \ \ k+1
\end{aligned} ; -1 \right).\\
\end{align*}
\end{cor}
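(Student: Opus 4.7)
The corollary is an immediate specialization of Proposition~\ref{p1} at $\beta=1$, combined with the observation that setting all variables to $1$ and $\beta=1$ in the tableau-sum definition~(\ref{GG}) of $G_\lambda$ simply counts set-valued tableaux. The plan has two short steps.

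First, I would return to the definition~(\ref{GG}):
\begin{align*}
G_\lambda(1,1,\dots,1\mid\beta) = \sum_{T\in\mathrm{SVT}(\lambda,n)} \beta^{|T|-|\lambda|}\cdot 1^{\omega_1(T)}\cdots 1^{\omega_n(T)} = \sum_{T\in\mathrm{SVT}(\lambda,n)} \beta^{|T|-|\lambda|}.
\end{align*}
Evaluating at $\beta=1$ collapses each summand to $1$, giving $G_\lambda(1,\dots,1\mid 1) = |\mathrm{SVT}(\lambda,n)|$. This identity holds for any shape $\lambda$, so in particular for $\lambda=(k)$.

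Second, I would substitute $\beta=1$ into the formula of Proposition~\ref{p1} with $\lambda=(k)$. The left-hand side becomes $|\mathrm{SVT}((k),n)|$ by the previous paragraph, and the right-hand side becomes $\binom{n+k-1}{k}\,{}_2F_1(k,1-n;k+1;-1)$. This yields the claimed equality.

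The only point worth flagging is the value of the argument $z=-1$, which lies on the boundary $|z|=1$ of the domain of convergence stated in~(\ref{hgf}). However, the factor $(1-n)_m$ vanishes for all $m\geq n$, so the series is actually a finite sum of length $n$ and no convergence issue arises. There is no genuine obstacle here; the corollary is a direct specialization of Proposition~\ref{p1}.
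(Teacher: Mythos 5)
Your proposal is correct and matches the paper's intended argument: the corollary is obtained immediately from Proposition~\ref{p1} by setting $\beta=1$, since by the tableau-sum definition~(\ref{GG}) the specialization $G_{(k)}(1,\dots,1\mid 1)$ counts $|{\rm SVT}((k),n)|$. Your remark that the series terminates because $(1-n)_m=0$ for $m\geq n$ correctly disposes of the only possible convergence concern at $z=-1$.
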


\begin{cor}\label{sc}
For any $\lambda= (1^k)\ (k=1,2,\dots,n)$, we have
\begin{align*}
|{\rm SVT}((1^k),n)| &=
\binom{n}{k}{_2}F_{1}
\!\left( \hspace{-3mm} \begin{aligned}
& 
  \ \  k,\ k-n \\
& 
   \ \ \ k+1
\end{aligned} ; -1 \right).\\
\end{align*}
\end{cor}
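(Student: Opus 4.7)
The plan is to deduce this immediately from Proposition \ref{p2} by specializing $\beta = 1$. The bridge between the Grothendieck polynomial and the cardinality $|{\rm SVT}((1^k), n)|$ comes straight from the tableau-sum definition~(\ref{GG}):
\[
G_\lambda(1,1,\dots,1 \mid 1) \;=\; \sum_{T \in {\rm SVT}(\lambda,n)} 1^{|T|-|\lambda|}\cdot 1^{\omega_1(T)}\cdots 1^{\omega_n(T)} \;=\; |{\rm SVT}(\lambda,n)|,
\]
so each set-valued tableau contributes exactly $1$ to the evaluation at $x_1=\cdots=x_n=\beta=1$. I would record this once as a preliminary remark (it likewise underlies Corollary \ref{sr}).

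With that identity in hand, I would then set $\beta=1$ on both sides of the formula in Proposition \ref{p2}. The left-hand side becomes $|{\rm SVT}((1^k),n)|$ by the observation above, while the right-hand side yields exactly $\binom{n}{k}\,{}_2F_1\!\left(\begin{aligned}&k,\ k-n\\&\ \ k+1\end{aligned};-1\right)$, which is the claim. No algebraic manipulation is required beyond substitution.

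There is essentially no obstacle here; the only subtle point worth noting is that the convergence restriction $|z|<1$ in the definition~(\ref{hgf}) does not cause trouble at $z=-\beta=-1$ because the upper parameter $k-n$ is a non-positive integer (recall $k\in\{1,2,\dots,n\}$), so $(k-n)_m$ vanishes for $m>n-k$ and the series terminates as a polynomial. Thus the specialization at $z=-1$ is literally a finite sum, and Proposition~\ref{p2} transfers verbatim.
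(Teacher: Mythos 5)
Your proposal is correct and matches the paper's (implicit) argument: the corollary is obtained exactly by setting $\beta=1$ in Proposition~\ref{p2} and noting from the tableau-sum definition~(\ref{GG}) that $G_{(1^k)}(1,\dots,1\mid 1)=|{\rm SVT}((1^k),n)|$. Your added remark that the ${}_2F_1$ series terminates because $k-n$ is a non-positive integer is a sound (and welcome) clarification, not a deviation.
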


\subsection{General cases of Young diagrams}\label{SS3.2}
In this subsection, we show special values of Grothendieck polynomials and provide an explicit formula for the number of set-valued tableaux in general of Young diagrams. Besides, we propose connections between the special values of Grothendieck polynomials and a multivariable hypergeometric function.

In~\cite{[CP21]}, \textit{refined Grothendieck polynomials} are introduced by generalizing the parameter $\beta$ of $G_\lambda$ to  $\mbox{\boldmath $\beta$}=(\beta_1,\beta_2,\dots,\beta_{j-1})$. These polynomials are defined using set-valued tableaux.
Additionally, following~\cite{[HJKSS21]}, \textit{refined canonical Grothendieck polynomials} are defined as:
\begin{align}\label{refG}
G_{\lambda}(x\mid \mbox{\boldmath $\alpha$},-\mbox{\boldmath $\beta$})=\frac{\left|x_i^{\lambda_j+n-j}\frac{(1-\beta_1 x_i)\cdots(1-\beta_{j-1}x_i)}{(1-\alpha_1 x_i)\cdots(1-\alpha_{\lambda_j}x_i)}\right|_{n \times n}}{\prod_{1\leq i < j \leq n}(x_i - x_j)}.
\end{align}
We set refined Grothendieck polynomials $G_{\lambda}(x\mid \mbox{\boldmath $\beta$})$ as follows and apply the principal generalization to derive an explicit formula for $|{\rm SVT}(\lambda,n)|$:
\begin{align}\label{refG_b}
G_{\lambda}(x\mid \mbox{\boldmath $\beta$})=\frac{\left| x_i^{\lambda_j+n-j}(1+\beta_1x_i)\cdots(1+\beta_{j-1}x_i)\right|}{\prod_{1\leq i < j \leq n}(x_i - x_j)}\eqqcolon\frac{p_\lambda{(x\mid \mbox{\boldmath $\beta$})}}{\Delta_n}.
\end{align}

\begin{thm}\label{M1}
For any $\lambda \vdash l$, we have
\begin{align}\label{eG}
&G_{\lambda}(1,q,q^2,\dots,q^{n-1}\mid \mbox{\boldmath $\beta$}) \notag \\
&= \sum_{k_1=0}^{0}\sum_{k_2 = 0}^{1} \cdots \sum_{k_{n} = 0}^{n - 1}e^{(0)}_{k_1}e^{(1)}_{k_2}\cdots e^{(n-1)}_{k_n}\prod_{1\leq i < j \leq n}\frac{q^{\lambda_j + n - j + k_j} - q^{\lambda_i + n - i + k_i}}{q^{n-j} - q^{n-i}},
\end{align}
where $e_{k_j}^{(j-1)}=e_{k_j}^{(j-1)}(\beta_1,\beta_2,\dots,\beta_{j-1})$ are $k_j$-th elementary symmetric polynomials and we promise $e^{(j-1)}_0=1$.
In particular, we obtain
\begin{align}\label{G_q}
&G_{\lambda}(1, q, q^2, \ldots, q^{n - 1}\mid \beta) \notag \\
&= \sum_{k_1=0}^{0}\sum_{k_2 = 0}^{1} \cdots \sum_{k_{n} = 0}^{n - 1}\binom{0}{k_1}\binom{1}{k_2}\cdots\binom{n - 1}{k_{n}}\beta^{k_1 + \cdots + k_{n}}\prod_{1\leq i < j \leq n}\frac{q^{\lambda_j + n - j + k_j} - q^{\lambda_i + n - i + k_i}}{q^{n-j} - q^{n-i}}.
\end{align}
\end{thm}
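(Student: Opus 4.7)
The plan is to start from the refined bi-alternant formula (\ref{refG_b}) and expand the $j$-dependent factor $(1+\beta_1 x_i)\cdots(1+\beta_{j-1}x_i)$ using the generating function for elementary symmetric polynomials:
\[
\prod_{m=1}^{j-1}(1+\beta_m x_i)=\sum_{k_j=0}^{j-1} e^{(j-1)}_{k_j}(\beta_1,\dots,\beta_{j-1})\, x_i^{k_j}.
\]
Substituting this into the $(i,j)$-entry $x_i^{\lambda_j+n-j}\prod_m(1+\beta_m x_i)$ of the matrix in (\ref{refG_b}) turns the numerator $p_\lambda(x\mid\boldsymbol{\beta})$ into the determinant of a matrix whose $j$-th column is a linear combination (with coefficients $e^{(j-1)}_{k_j}$) of columns of the form $(x_i^{\lambda_j+n-j+k_j})_{i=1}^n$.

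Next I would invoke multilinearity of the determinant in its columns to pull these sums out front:
\[
p_\lambda(x\mid\boldsymbol{\beta})=\sum_{k_1=0}^{0}\sum_{k_2=0}^{1}\cdots\sum_{k_n=0}^{n-1}\Bigl(\prod_{j=1}^{n} e^{(j-1)}_{k_j}\Bigr)\,\det\!\bigl(x_i^{\lambda_j+n-j+k_j}\bigr)_{1\le i,j\le n}.
\]
Now specialize $x_i=q^{i-1}$. Writing $\mu_j=\lambda_j+n-j+k_j$, the remaining determinant becomes $\det\bigl(q^{(i-1)\mu_j}\bigr)$, which is a transposed Vandermonde in the variables $q^{\mu_j}$; it evaluates to $\prod_{1\le i<j\le n}(q^{\mu_j}-q^{\mu_i})$. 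A short reindexing $(i,j)\mapsto(n-j+1,n-i+1)$ shows that the denominator satisfies
\[
\Delta_n\big|_{x_i=q^{i-1}}=\prod_{1\le i<j\le n}(q^{i-1}-q^{j-1})=\prod_{1\le i<j\le n}(q^{n-j}-q^{n-i}),
\]
so dividing the generalized Vandermonde numerator by $\Delta_n$ yields the product $\prod_{i<j}\frac{q^{\mu_j}-q^{\mu_i}}{q^{n-j}-q^{n-i}}$, which is exactly the factor in (\ref{eG}). Combining everything gives the first assertion.

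For (\ref{G_q}) I would simply specialize $\beta_1=\cdots=\beta_{n-1}=\beta$ in (\ref{eG}); by the definition of the elementary symmetric polynomials, $e^{(j-1)}_{k_j}(\beta,\dots,\beta)=\binom{j-1}{k_j}\beta^{k_j}$, and collecting the powers of $\beta$ as $\beta^{k_1+\cdots+k_n}$ yields the stated formula. The only place requiring some care is step four, matching the Vandermonde orientation between numerator and denominator after the specialization $x_i=q^{i-1}$; once the reindexing is set up correctly, the remaining work is purely bookkeeping.
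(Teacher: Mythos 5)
Your proposal is correct and takes essentially the same route as the paper: expand each column of the numerator in (\ref{refG_b}) via the generating function $\prod_m(1+\beta_m x_i)=\sum_{k_j}e^{(j-1)}_{k_j}x_i^{k_j}$, pull the sums out by multilinearity of the determinant, specialize $x_i=q^{i-1}$, and evaluate both the generalized Vandermonde numerator and the denominator $\Delta_n$ as products, then set $\beta_1=\cdots=\beta_{n-1}=\beta$ for (\ref{G_q}). The only difference is cosmetic bookkeeping in how the denominator $\prod_{i<j}(q^{i-1}-q^{j-1})$ is reindexed to $\prod_{i<j}(q^{n-j}-q^{n-i})$.
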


\begin{proof}
We calculate each column vector in $p_\lambda(x\mid \mbox{\boldmath $\beta$})$ as follows:
\begin{align*}
\left(
\begin{array}{c}
x_{1}^{\lambda_i + n - i}(1 + \beta_1 x_1)\cdots(1 + \beta_{j-1} x_1) \\
x_{2}^{\lambda_i + n - i}(1 + \beta_1 x_2)\cdots(1 + \beta_{j-1} x_2) \\
\vdots \\
x_{n}^{\lambda_i + n - i}(1 + \beta_1 x_n)\cdots(1 + \beta_{j-1} x_n) 
\end{array}
\right)
=
\sum_{k_{j} = 0}^{j - 1}e^{(j-1)}_{k_j}
\left(
\begin{array}{c}
x_{1}^{\lambda_i + n - i + k_{j}}\\
x_{2}^{\lambda_i + n - i + k_{j}}\\
\vdots \\
x_{n}^{\lambda_i + n - i + k_{j}}
\end{array}
\right).
\end{align*}
Thus, the determinant $p_\lambda(x\mid \mbox{\boldmath $\beta$})$ is transformed as
\begin{align}\label{q_1}
&p_\lambda(x\mid \mbox{\boldmath $\beta$}) \notag \\
=&\left|
\begin{array}{cccc}
x_{1}^{\lambda_1 + n - 1} & x_{1}^{\lambda_2 + n - 2}(1 + \beta_1 x_1) & \cdots & x_{1}^{\lambda_n}(1 + \beta_1 x_1)\cdots(1+\beta_{n-1} x_1) \\
x_{2}^{\lambda_1 + n - 1} & x_{2}^{\lambda_2 + n - 2}(1 + \beta_1 x_2) & \cdots & x_{2}^{\lambda_n}(1 + \beta_1 x_2)\cdots(1+\beta_{n-1} x_2)  \\
\vdots & \vdots & \  & \vdots \\
x_{n}^{\lambda_1 + n - 1} & x_{n}^{\lambda_2 + n - 2}(1 + \beta_1 x_n) & \cdots & x_{n}^{\lambda_n}(1 + \beta_1 x_n)\cdots(1+\beta_{n-1} x_n) 
\end{array}
\right| \notag \\
=& 
 \sum_{k_1 = 0}^{0}\sum_{k_2 = 0}^{1} \cdots \sum_{k_{n} = 0}^{n - 1}e^{(0)}_{k_1}e^{(1)}_{k_2}\cdots e^{(n-1)}_{k_n}
\left|
\begin{array}{cccc}
x_{1}^{\lambda_1 + n - 1 +k_1} & x_{1}^{\lambda_2 + n - 2 + k_{2}} & \cdots & x_{1}^{\lambda_n + k_{n}} \\
x_{2}^{\lambda_1 + n - 1 +k_1} & x_{2}^{\lambda_2 + n - 2 + k_{2}} & \cdots & x_{2}^{\lambda_n + k_{n}} \\
\vdots & \vdots & \  & \vdots \\
x_{n}^{\lambda_1 + n - 1 +k_1} & x_{n}^{\lambda_2 + n - 2 + k_{2}} & \cdots & x_{n}^{\lambda_n + k_{n}} 
\end{array}
\right|.
\end{align}
By putting $(x_1,x_2,x_3,\dots,x_n)=(1,q,q^2,\dots,q^{n-1})$, we have
\begin{align}\label{q_2}
&\left|
\begin{array}{cccc}
x_{1}^{\lambda_1 + n - 1 +k_1} & x_{1}^{\lambda_2 + n - 2 + k_{2}} & \cdots & x_{1}^{\lambda_n + k_{n}} \\
x_{2}^{\lambda_1 + n - 1 + k_1} & x_{2}^{\lambda_2 + n - 2 + k_{2}} & \cdots & x_{2}^{\lambda_n + k_{n}} \\
\vdots & \vdots & \ & \vdots \\
x_{n}^{\lambda_1 + n - 1 +k_1} & x_{n}^{\lambda_2 + n - 2 + k_{2}} & \cdots & x_{n}^{\lambda_n + k_{n}} 
\end{array}
\right| \notag \notag \\
=&
\left|
\begin{array}{cccc}
1 & 1 & \cdots & 1 \\
q^{\lambda_1 + n - 1 + k_1} & q^{\lambda_2 + n - 2 + k_{2}} & \cdots & q^{\lambda_n + k_{n}} \\
\vdots & \vdots & \ & \vdots \\
(q^{\lambda_1 + n - 1 + k_1})^{n - 1} & (q^{\lambda_2 + n - 2 + k_{2}})^{n - 1} & \cdots & (q^{\lambda_n + k_{n}})^{n - 1}
\end{array}
\right| \notag \\
=&
\prod_{1\leq i < j \leq n}(q^{\lambda_j + n - j + k_j} - q^{\lambda_i + n - i + k_i}).
\end{align}
On the other hand, we get
\begin{align}\label{q_3}
\Delta_n(1,q,q^2,\dots,q^{n-1}) = \prod_{1 \leq i<j \leq n} (q^{n-j} - q^{n-i}).
\end{align}
From~(\ref{q_1}),~(\ref{q_2}) and~(\ref{q_3}), we have the claim~(\ref{eG}).
Assigning $\beta_1=\beta_2=\beta_{j-1}=\beta$, the second claim~(\ref{G_q}) is immediately obtained.
\end{proof}
\begin{rem}
In~\cite{[Len00]}, the Schur expansions of $G_{\lambda}$ are given in general $\lambda$.
This formula is denoted as a combinatorial context.
It is not clear whether our result~(\ref{G_q}) can be derived from the combinatorial formula.
\end{rem}
In the following, we have an explicit formula of the number of set-valued tableaux by taking $q \rightarrow 1$ for $(\ref{G_q})$.
\begin{cor}\label{cor1}
For any $\lambda \vdash l$, we have
\begin{align}\label{c1}
|{\rm SVT}(\lambda, n)| = \sum_{k_1 = 0}^{0}\sum_{k_2 = 0}^{1} \cdots \sum_{k_{n} = 0}^{n - 1}\binom{0}{k_1}\binom{1}{k_2}\cdots\binom{n - 1}{k_{n}}\prod_{1\leq i < j \leq n}\frac{\lambda_i - \lambda_j + k_i - k_j +j-i}{j-i}.
\end{align}
\end{cor}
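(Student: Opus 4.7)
The plan is to obtain the corollary as a direct limit of Theorem~\ref{M1}. The starting observation is that, by the combinatorial definition~(\ref{GG}),
\begin{align*}
G_\lambda(1,1,\dots,1\mid 1)=\sum_{T\in {\rm SVT}(\lambda,n)}1^{|T|-|\lambda|}\cdot 1^{\omega(T)}=|{\rm SVT}(\lambda,n)|,
\end{align*}
so it suffices to take $\beta=1$ and let $q\to 1$ in the identity~(\ref{G_q}). On the right-hand side, the factor $\beta^{k_1+\cdots+k_n}$ becomes $1$, and the binomial coefficients $\binom{j-1}{k_j}$ are independent of $q$, so the only thing that has to be analyzed is the limit of the product
\begin{align*}
\prod_{1\leq i<j\leq n}\frac{q^{\lambda_j+n-j+k_j}-q^{\lambda_i+n-i+k_i}}{q^{n-j}-q^{n-i}}
\end{align*}
as $q\to 1$.

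Each factor is of the form $(q^a-q^b)/(q^c-q^d)$ with $c\ne d$, and a standard computation (either by l'Hopital's rule or by factoring out $q^{\min(a,b)}$ and using $\lim_{q\to 1}(q^m-1)/(q-1)=m$) gives
\begin{align*}
\lim_{q\to 1}\frac{q^a-q^b}{q^c-q^d}=\frac{a-b}{c-d}.
\end{align*}
With $a=\lambda_j+n-j+k_j$, $b=\lambda_i+n-i+k_i$, $c=n-j$, $d=n-i$, this limit equals
\begin{align*}
\frac{\lambda_j-\lambda_i+k_j-k_i-(j-i)}{-(j-i)}=\frac{\lambda_i-\lambda_j+k_i-k_j+j-i}{j-i},
\end{align*}
which is exactly the factor appearing on the right-hand side of~(\ref{c1}). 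Taking the limit termwise in the finite sum from~(\ref{G_q}) then yields~(\ref{c1}).

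The only subtle point is the termwise passage to the limit, but since the sum is finite and each summand is a rational function of $q$ whose denominator $\prod_{i<j}(q^{n-j}-q^{n-i})$ has a removable zero at $q=1$ (matched by a zero of the same order in the numerator, as guaranteed by the polynomial character of $G_\lambda(1,q,\dots,q^{n-1}\mid\beta)$), each factor extends continuously to $q=1$ with the value computed above. Hence the limit of the right side of~(\ref{G_q}) at $q=1$, $\beta=1$ is exactly the right side of~(\ref{c1}), completing the proof.
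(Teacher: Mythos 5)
Your proposal is correct and follows exactly the route the paper intends: the corollary is obtained from~(\ref{G_q}) by setting $\beta=1$ (so that the left side becomes $|{\rm SVT}(\lambda,n)|$ by~(\ref{GG})) and letting $q\to 1$ factor by factor, with $\lim_{q\to 1}(q^a-q^b)/(q^c-q^d)=(a-b)/(c-d)$ giving precisely the hook-like ratios in~(\ref{c1}). Your closing appeal to the polynomial character of $G_\lambda$ is unnecessary (and not quite the right justification for individual terms), since the factorization $q^a-q^b=q^{\min(a,b)}(q-1)(1+q+\cdots+q^{|a-b|-1})$ already makes each summand continuous at $q=1$, and the sum is finite.
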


The equation~(\ref{c1}) can be denoted by using the following Holman's hypergeometric functions $F^{(n)}$:
\begin{align}\label{c2}
F^{(n)}=&F^{(n)}((A_{ij})_{(n-1)\times (n-1)}|(a_{ij})_{n \times u}|(b_{ij})_{n \times v}|(z_{i1})_{n\times 1}) \notag \\
=& \sum_{k_1,\dots,k_n=0}^{\infty}\left( \prod_{1\leq i<j\leq n}\frac{A_{ij}+k_i-k_j}{A_{ij}} \right)\left( \prod_{j=1}^{u}\prod_{i=1}^n(a_{ij})_{k_i} \right)\left( \prod_{j=1}^{v}\prod_{i=1}^n\frac{1}{(b_{ij})_{k_i}} \right)\left( \prod_{i=1}^n z_{i1}^{k_i} \right),
\end{align}
where
\begin{align*}
(A_{ij})_{(n-1)\times (n-1)} = \begin{pmatrix}
  A_{12} &\ & & \\
  A_{13} & A_{23} & & \text{\Huge 0} \\
  \vdots & \vdots &\ddots & \\
  A_{1n} & A_{2n} &\cdots & A_{n-1n}
\end{pmatrix}.
\end{align*}

\begin{thm}\label{M2}
For any $\lambda \vdash l$, we have
\begin{align}\label{G_H}
&G_{\lambda}(1, 1, \dots, 1\mid \beta) \notag \\
&= |{\rm SST}(\lambda,n)|\cdot F^{(n)} \left(\begin{pmatrix}
  A_{12} &\ & & \\
  A_{13} & A_{23} & & \text{\Huge $0$} \\
  \vdots & \vdots &\ddots & \\
  A_{1n} & A_{2n} &\cdots & A_{n-1n}
\end{pmatrix} \middle| \begin{pmatrix}
  0 \\
  -1 \\
  \vdots \\
  -(n-1)
\end{pmatrix} \middle| \begin{pmatrix}
  1 \\
  1 \\
  \vdots \\
  1
\end{pmatrix} \middle| \begin{pmatrix}
  -\beta \\
  -\beta \\
  \vdots \\
  -\beta
\end{pmatrix} \right),
\end{align}
where $A_{ij}=\lambda_i - \lambda_j + j - i$.

\end{thm}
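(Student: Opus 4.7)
The plan is to derive the formula as a direct corollary of Theorem~\ref{M1} (equation~(\ref{G_q})), by taking the principal-specialization limit $q \to 1$ and then repackaging the resulting sum in the shape of the Holman function $F^{(n)}$.

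First, I would apply L'Hopital (or the elementary identity $\lim_{q\to1}(q^a-q^b)/(q^c-q^d)=(a-b)/(c-d)$) to each factor of
$$
\frac{q^{\lambda_j+n-j+k_j}-q^{\lambda_i+n-i+k_i}}{q^{n-j}-q^{n-i}}
$$
appearing in (\ref{G_q}). The limit is $(\lambda_i-\lambda_j+j-i+k_i-k_j)/(j-i)=(A_{ij}+k_i-k_j)/(j-i)$, where $A_{ij}=\lambda_i-\lambda_j+j-i$. This transforms (\ref{G_q}) at $q=1$ into
$$
G_{\lambda}(1,\dots,1\mid\beta)=\sum_{k_1=0}^{0}\cdots\sum_{k_n=0}^{n-1}\binom{0}{k_1}\cdots\binom{n-1}{k_n}\beta^{k_1+\cdots+k_n}\prod_{1\le i<j\le n}\frac{A_{ij}+k_i-k_j}{j-i}.
$$

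Next, I would split the product
$$
\prod_{1\le i<j\le n}\frac{A_{ij}+k_i-k_j}{j-i}=\Bigg(\prod_{1\le i<j\le n}\frac{A_{ij}}{j-i}\Bigg)\Bigg(\prod_{1\le i<j\le n}\frac{A_{ij}+k_i-k_j}{A_{ij}}\Bigg).
$$
The first bracket equals $|{\rm SST}(\lambda,n)|$ by the hook-length formula (\ref{h1}), which factors out of the sum. The second bracket is exactly the $A_{ij}$-prefactor inside the Holman function $F^{(n)}$ from (\ref{c2}).

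The main step is then to identify the remaining factors $\binom{i-1}{k_i}\beta^{k_i}$ with the Pochhammer/variable data of $F^{(n)}$ evaluated at the parameters claimed in (\ref{G_H}). Using $a_{i1}=-(i-1)$, $b_{i1}=1$, $z_{i1}=-\beta$, one computes
$$
(-(i-1))_{k_i}=(-1)^{k_i}(i-1)(i-2)\cdots(i-k_i),\qquad (1)_{k_i}=k_i!,
$$
so $(-(i-1))_{k_i}/(1)_{k_i}=(-1)^{k_i}\binom{i-1}{k_i}$, which vanishes for $k_i\ge i$; thus the infinite sum in (\ref{c2}) truncates to $0\le k_i\le i-1$, matching the ranges in (\ref{G_q}). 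Finally $\prod_i(-1)^{k_i}(-\beta)^{k_i}=\beta^{k_1+\cdots+k_n}$, giving exactly the sum obtained in the previous paragraph.

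No real obstacle arises — the content of the theorem is just a repackaging of Theorem~\ref{M1} and the hook-length formula into the notation of $F^{(n)}$. The only point that requires mild care is checking that the truncation $k_i\le i-1$ coming from the vanishing of $(-(i-1))_{k_i}$ for large $k_i$ is consistent with the summation ranges in (\ref{G_q}), and that the signs from $(-(i-1))_{k_i}$ and $z_{i1}^{k_i}=(-\beta)^{k_i}$ cancel to produce a positive power of $\beta$.
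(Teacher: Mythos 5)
Your proposal is correct and follows essentially the same route as the paper: take the $q\to1$ limit of~(\ref{G_q}), factor out $\prod_{i<j}A_{ij}/(j-i)=|{\rm SST}(\lambda,n)|$ via the hook-length formula~(\ref{h1}), and match the remaining sum with the definition~(\ref{c2}) of $F^{(n)}$ using $(-(i-1))_{k_i}/(1)_{k_i}=(-1)^{k_i}\binom{i-1}{k_i}$ and $z_{i1}=-\beta$. Your added remarks on the truncation $k_i\le i-1$ and the sign cancellation only make explicit what the paper leaves implicit.
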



\begin{proof}
We can rewrite the right-hand side of (\ref{G_q}) by considering $q \rightarrow 1$ as follows: 
\begin{align*}
&({\rm RHS\ of\ (\ref{G_q})}) \notag \\
&= \prod_{1 \leq i < j \leq n}\frac{A_{ij}}{j-i}\sum_{k_1,\dots,k_n=0}^{\infty} \prod_{1 \leq i < j \leq n}\frac{A_{ij}+k_i-k_j}{A_{ij}}\frac{(-0)_{k_1}(-\beta)^{k_1}}{(1)_{k_1}}\cdots \frac{(-(n-1))_{k_n}(-\beta)^{k_n}}{(1)_{k_n}} \notag \\
&=|{\rm SST}(\lambda,n)|\cdot \sum_{k_1,\dots,k_n=0}^{\infty}\left( \prod_{1\leq i <j \leq n}\frac{A_{ij}+k_i-k_j}{A_{ij}} \right)\left(\prod_{i=1}^n(-(i-1))_{k_i} \right)\left( \prod_{i=1}^n\frac{1}{(1)_{k_i}} \right)\left( \prod_{i=1}^n (-\beta)_{i1}^{k_i} \right)\notag \\
&= |{\rm SST}(\lambda,n)|\cdot F^{(n)} \left(\begin{pmatrix}
  A_{12} &\ & & \\
  A_{13} & A_{23} & & \text{\Huge $0$} \\
  \vdots & \vdots &\ddots & \\
  A_{1n} & A_{2n} &\cdots & A_{n-1n}
\end{pmatrix} \middle| \begin{pmatrix}
  0 \\
  -1 \\
  \vdots \\
  -(n-1)
\end{pmatrix} \middle| \begin{pmatrix}
  1 \\
  1 \\
  \vdots \\
  1
\end{pmatrix} \middle| \begin{pmatrix}
  -\beta \\
  -\beta \\
  \vdots \\
  -\beta
\end{pmatrix} \right).
\end{align*}
Subsequently, we have the result.
\end{proof}
\begin{rem}
It seems that the Grothendieck polynomials $G_\lambda(x\mid \beta)$ are not rewritten as a hypergeometric function like the equation~(\ref{G_H}) in general $x$.
\end{rem}
Due to the equation (\ref{G_H}), we immediately obtain the following result by setting $\beta=1$.

\begin{cor}\label{cor2}
For any $\lambda \vdash l$, we have
\begin{align}\label{Holman}
|{\rm SVT}(\lambda,n)| 
=|{\rm SST}(\lambda,n)|\cdot F^{(n)} \left(\begin{pmatrix}
  A_{12} &\ & & \\
  A_{13} & A_{23} & & \text{\Huge $0$} \\
  \vdots & \vdots &\ddots & \\
  A_{1n} & A_{2n} &\cdots & A_{n-1n}
\end{pmatrix} \middle| \begin{pmatrix}
  0 \\
  -1 \\
  \vdots \\
  -(n-1)
\end{pmatrix} \middle| \begin{pmatrix}
  1 \\
  1 \\
  \vdots \\
  1
\end{pmatrix} \middle| \begin{pmatrix}
  -1 \\
  -1 \\
  \vdots \\
  -1
\end{pmatrix} \right).
\end{align}
\end{cor}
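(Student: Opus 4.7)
The plan is very short because the corollary is advertised as an immediate specialization of Theorem~\ref{M2}. I would simply evaluate both sides of~\eqref{G_H} at $\beta=1$ and interpret what the left-hand side becomes.

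First I would go back to the tableau sum definition \eqref{GG} of the Grothendieck polynomial,
\[
G_\lambda(x_1,\dots,x_n\mid\beta)=\sum_{T\in{\rm SVT}(\lambda,n)}\beta^{|T|-|\lambda|}x^{\omega(T)},
\]
and set $x_1=\cdots=x_n=1$ together with $\beta=1$. Since $x^{\omega(T)}=x_1^{\omega_1(T)}\cdots x_n^{\omega_n(T)}$ reduces to $1$ and $\beta^{|T|-|\lambda|}=1^{|T|-|\lambda|}=1$ for every set-valued tableau $T$, the sum collapses to a pure enumeration:
\[
G_\lambda(1,1,\dots,1\mid 1)=\sum_{T\in{\rm SVT}(\lambda,n)}1=|{\rm SVT}(\lambda,n)|.
\]

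Next, I would apply Theorem~\ref{M2}, which gives the closed identity
\[
G_\lambda(1,1,\dots,1\mid\beta)=|{\rm SST}(\lambda,n)|\cdot F^{(n)}\!\left((A_{ij})\,\big|\,(0,-1,\dots,-(n-1))^T\,\big|\,(1,\dots,1)^T\,\big|\,(-\beta,\dots,-\beta)^T\right),
\]
with $A_{ij}=\lambda_i-\lambda_j+j-i$. Specializing $\beta=1$ on the right replaces the final column vector $(-\beta,\dots,-\beta)^T$ by $(-1,\dots,-1)^T$, while leaving the $A_{ij}$ and the two middle vectors untouched, because they do not depend on $\beta$.

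Combining these two evaluations of $G_\lambda(1,\dots,1\mid 1)$ yields exactly~\eqref{Holman}, so the corollary follows directly from Theorem~\ref{M2}. There is no real obstacle here: the only thing to check is the substitution $\beta=1$ and the observation that the definition~\eqref{GG} with all $x_i=1$ and $\beta=1$ counts $|{\rm SVT}(\lambda,n)|$. No additional convergence issue arises because the hypergeometric series $F^{(n)}$ terminates, as the parameters $0,-1,\dots,-(n-1)$ in the second slot force only finitely many nonzero terms in the multisum.
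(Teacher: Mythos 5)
Your proposal is correct and matches the paper's own argument, which simply specializes $\beta=1$ in Theorem~\ref{M2}; your extra remarks that $G_\lambda(1,\dots,1\mid 1)=|{\rm SVT}(\lambda,n)|$ by the tableau-sum definition~(\ref{GG}) and that the multisum terminates are just the details the paper leaves implicit.
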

In~\cite{FNS}, we proved the value of~(\ref{Holman}) is always odd. Furthermore, we established a special value of Grothendieck polynomials in the following.
\begin{prop}(\cite{FNS})\label{AA}
For any $\lambda \vdash l$, we have
\begin{align}\label{A1}
G_{\lambda}(\beta,\beta,\dots,\beta \mid {-\beta^{-1}})=\beta^{|\lambda|}.
\end{align}
\end{prop}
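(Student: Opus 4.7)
The plan is to use the tableau-sum formula~(\ref{GG}) to reduce the identity to a signed enumeration of set-valued tableaux, then evaluate that signed count by grouping tableaux according to their ``minimum'' semi-standard skeleton.

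First, substituting $x_1=\cdots=x_n=\beta$ and the parameter $-\beta^{-1}$ into~(\ref{GG}), every monomial $x^{\omega(T)}$ collapses to $\beta^{|T|}$ and every weight becomes $(-\beta^{-1})^{|T|-|\lambda|}=(-1)^{|T|-|\lambda|}\beta^{-(|T|-|\lambda|)}$; multiplying and simplifying gives
\begin{equation*}
G_{\lambda}(\beta,\dots,\beta \mid -\beta^{-1}) \;=\; \beta^{|\lambda|}\sum_{T \in \mathrm{SVT}(\lambda,n)}(-1)^{|T|-|\lambda|}.
\end{equation*}
Thus the proposition reduces to the signed identity $\sum_{T \in \mathrm{SVT}(\lambda,n)}(-1)^{|T|-|\lambda|}=1$.

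For each SVT $T$, define its minimum skeleton $T^{\min}\in\mathrm{SST}(\lambda,n)$ by $(T^{\min})_{ij}=\min T_{ij}$, and fix a SST $S$. The SVT axioms imply that the fibre $\{T:T^{\min}=S\}$ decomposes as a product of independent box-by-box choices: in box $(i,j)$ one may freely add any subset of the integers strictly between $S_{ij}$ and $U_{ij}:=\min(S_{i,j+1},\,S_{i+1,j}-1,\,n)$, with the convention that a non-existent neighbour imposes no constraint. Hence
\begin{equation*}
\sum_{T:\,T^{\min}=S}(-1)^{|T|-|\lambda|} \;=\; \prod_{(i,j)\in\lambda}(1-1)^{U_{ij}-S_{ij}} \;=\; \prod_{(i,j)\in\lambda}[U_{ij}=S_{ij}],
\end{equation*}
which vanishes unless $S_{ij}=U_{ij}$ at every box and equals $1$ in that case.

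The main obstacle, though still elementary, is to show that exactly one SST $S^{*}$ is \emph{saturated} in the sense $S^{*}_{ij}=U_{ij}(S^{*})$ for every $(i,j)\in\lambda$. A direct candidate is $S^{*}_{ij}=n-\ell_j+i$, where $\ell_j$ denotes the length of the $j$-th column of $\lambda$. I would verify saturation by a short case analysis on whether $(i,j+1)$ and $(i+1,j)$ lie in $\lambda$, using only that column lengths are weakly decreasing. Uniqueness follows by sweeping $\lambda$ from its outer corners inward: the equation $S_{ij}=\min(S_{i,j+1},S_{i+1,j}-1,n)$ determines $S_{ij}$ once the right and below neighbours are fixed. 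Summing the contributions then gives $\sum_{T}(-1)^{|T|-|\lambda|}=1$ and hence $G_{\lambda}(\beta,\dots,\beta\mid -\beta^{-1})=\beta^{|\lambda|}$.
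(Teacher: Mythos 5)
Your proof is correct; note that it cannot be compared line-by-line with the paper's own argument because the paper gives none --- Proposition~\ref{AA} is quoted from~\cite{FNS} without proof. Your reduction via the tableau sum~(\ref{GG}) to the signed identity $\sum_{T\in\mathrm{SVT}(\lambda,n)}(-1)^{|T|-|\lambda|}=1$ is exact, since $x^{\omega(T)}=\beta^{|T|}$ and $(-\beta^{-1})^{|T|-|\lambda|}\beta^{|T|}=(-1)^{|T|-|\lambda|}\beta^{|\lambda|}$. The fibre analysis is sound: for fixed $S=T^{\min}$ the extra entries of box $(i,j)$ range exactly over the subsets of $\{S_{ij}+1,\dots,U_{ij}\}$ with $U_{ij}=\min(S_{i,j+1},S_{i+1,j}-1,n)$, the boxes are independent because each SVT constraint couples the maximum of a box only to the (fixed) minima of its right and lower neighbours, and $U_{ij}\ge S_{ij}$ holds automatically for an SST, so the signed fibre sum is $\prod_{(i,j)\in\lambda}0^{\,U_{ij}-S_{ij}}$ as you claim. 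Your candidate $S^{*}_{ij}=n-\ell_j+i$ does work: it is an SST since column lengths weakly decrease; it is saturated because $S^{*}_{i+1,j}-1=S^{*}_{ij}$ whenever the box below exists, while $S^{*}_{ij}=n$ (and the right neighbour, if present, also equals $n$) when it does not; and it is the unique saturated SST because the relation $S_{ij}=\min(S_{i,j+1},S_{i+1,j}-1,n)$ determines all entries by induction on decreasing $i+j$ starting from the outer corners. (For $\lambda=(2,1)$, $n=3$ this reproduces $8-12+6-1=1$, consistent with~(\ref{1}).) The natural alternative in the spirit of this paper would be to specialize the bi-alternant formula~(\ref{W}), or the principal specialization of Theorem~\ref{M1}, and then take a limit as the variables coalesce; your route is more elementary and purely combinatorial, and it has the added benefit of giving a sign-reversing explanation of the evaluation $G_\lambda(1,\dots,1\mid-1)=1$ that underlies Theorem~\ref{M3}.
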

In particular, from~(\ref{A1}), we immediately obtain 
\begin{align}\label{1}
G_\lambda(1,1,\dots,1\mid -1)=1.
\end{align}
Using~(\ref{G_H}) and~(\ref{1}), we get the following result.
\begin{thm}\label{M3}
For any $\lambda \vdash l$, we have
\begin{align}\label{FSST}
F^{(n)} \left(\begin{pmatrix}
  A_{12} &\ & & \\
  A_{13} & A_{23} & & \text{\Huge $0$} \\
  \vdots & \vdots &\ddots & \\
  A_{1n} & A_{2n} &\cdots & A_{n-1n}
\end{pmatrix} \middle| \begin{pmatrix}
  0 \\
  -1 \\
  \vdots \\
  -(n-1)
\end{pmatrix} \middle| \begin{pmatrix}
  1 \\
  1 \\
  \vdots \\
  1
\end{pmatrix} \middle| \begin{pmatrix}
  1 \\
  1 \\
  \vdots \\
  1
\end{pmatrix} \right)=\frac{1}{|{\rm SST}(\lambda.n)|}.
\end{align}
\end{thm}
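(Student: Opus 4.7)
The plan is to specialize the master identity~(\ref{G_H}) of Theorem~\ref{M2} at the single value $\beta=-1$ and combine it with the special value~(\ref{1}). Among the four argument blocks of $F^{(n)}$ appearing in~(\ref{G_H}), the triangular matrix $(A_{ij})$, the upper-parameter column $(0,-1,\dots,-(n-1))^{t}$, and the lower-parameter column $(1,\dots,1)^{t}$ are all independent of $\beta$; only the final column $(-\beta,\dots,-\beta)^{t}$ carries the $\beta$-dependence. Substituting $\beta=-1$ therefore converts this last column into $(1,\dots,1)^{t}$, so that the right-hand side of~(\ref{G_H}) becomes exactly $|{\rm SST}(\lambda,n)|$ multiplied by the hypergeometric expression on the left-hand side of~(\ref{FSST}).

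On the other side of~(\ref{G_H}), the same substitution produces the value $G_{\lambda}(1,1,\dots,1\mid -1)$, which by equation~(\ref{1}) — itself the $\beta=1$ case of Proposition~\ref{AA} — equals $1$. Equating the two specialized sides and dividing through by $|{\rm SST}(\lambda,n)|$, which is a nonzero positive integer by the hook-content formulas~(\ref{h1})--(\ref{h2}), yields precisely the claimed identity~(\ref{FSST}).

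Both ingredients are already established earlier in the paper, so the argument is essentially a direct substitution rather than a new computation. I do not anticipate a substantive obstacle: the only point to verify carefully is that the four input slots of $F^{(n)}$ at $\beta=-1$ align with the four slots written in~(\ref{FSST}), which is immediate by inspection of the vectors. If anything, the mild technical nuisance is bookkeeping the sign, i.e.\ checking that $-\beta=1$ when $\beta=-1$ so that the $z$-argument column really becomes $(1,\dots,1)^{t}$, and confirming that the $F^{(n)}$-series is well-defined at this value (which it is, since the upper parameters $-(i-1)$ truncate each $k_{i}$-summation to a finite range $0\le k_{i}\le i-1$, exactly as in~(\ref{c1})).
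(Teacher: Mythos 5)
Your proposal is correct and coincides with the paper's own proof: both specialize the identity~(\ref{G_H}) so that the $z$-column becomes $(1,\dots,1)^{t}$ (the paper writes this as replacing $\beta$ by $-\beta^{-1}$ and then setting $\beta=1$, which is the same as your direct substitution $\beta=-1$), and then invoke $G_\lambda(1,\dots,1\mid -1)=1$ from~(\ref{1}) before dividing by the nonzero integer $|{\rm SST}(\lambda,n)|$. No gaps.
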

\begin{proof}
In~(\ref{G_H}), replace $\beta$ with $-\beta^{-1}$ and by specializing $\beta=1$, we obtain the following:
\begin{align*}
G(1,1,\dots,1 \mid -1) = |{\rm SST}(\lambda,n)|\cdot F^{(n)} \left(\begin{pmatrix}
  A_{12} &\ & & \\
  A_{13} & A_{23} & & \text{\Huge $0$} \\
  \vdots & \vdots &\ddots & \\
  A_{1n} & A_{2n} &\cdots & A_{n-1n}
\end{pmatrix} \middle| \begin{pmatrix}
  0 \\
  -1 \\
  \vdots \\
  -(n-1)
\end{pmatrix} \middle| \begin{pmatrix}
  1 \\
  1 \\
  \vdots \\
  1
\end{pmatrix} \middle| \begin{pmatrix}
  1 \\
  1 \\
  \vdots \\
  1
\end{pmatrix} \right).
\end{align*}
Hence, we have the result from (\ref{1}).
\end{proof}

\begin{rem}
In~\cite{Hol80,[Mil88]}, a summation formula for $F^{(n)}$ is given as follows:
\begin{align}\label{Holman2}
&F^{(n)} \left(\begin{pmatrix}
  A_{12} &\ & & \\
  A_{13} & A_{23} & & \text{\Huge $0$} \\
  \vdots & \vdots &\ddots & \\
  A_{1n} & A_{2n} &\cdots & A_{n-1n}
\end{pmatrix} \middle| \begin{pmatrix}
   a_{11} &\cdots &a_{nn+1} \\
   \vdots & \  & \vdots \\
   a_{nn+1} & \cdots & a_{nn+1}
\end{pmatrix} \middle| \begin{pmatrix}
   b_{11} &\cdots & b_{nn+1} \\
   \vdots & \  & \vdots \\
   b_{nn+1} & \cdots & b_{nn+1}
\end{pmatrix} \middle| \begin{pmatrix}
  1 \\
  1 \\
  \vdots \\
  1
\end{pmatrix} \right) \notag \\
&= \frac{\Gamma(b_{nn+1}-\sum_{i=1}^n a_{ii})}{\Gamma(b_{nn+1}-a_{nn+1})}\prod_{i=1}^n\frac{\Gamma(b_{in+1})}{\Gamma(b_{in+1}-a_{ii})},
\end{align}
where $n\in \mathbb{Z}_{\geq 1}$ and
\begin{align*}
{\rm Re}\left(  \sum_{i=1}^{n+1}b_{ki}-\sum_{i=1}^{n+1}a_{ki}\right)>n\quad (1\leq k \leq n),
\end{align*}
and, the parameters satisfy the following conditions:
\begin{equation}\label{q_4}
\begin{split}
A_{id}-A_{ic}&=A_{cd} \quad(c < d),\\
a_{id}-a_{cr}&=A_{ic} \quad (i < c),\\
b_{id}-b_{cr}&=A_{ic} \quad (i < c),\\
b_{ii}&=1 \quad(1\leq i \leq n).
\end{split}
\end{equation}
By focusing on $(a_{ij})$ and $(b_{ij})$, our summation formula~(\ref{FSST}) is different from~(\ref{Holman2}). For instance, for $\lambda=(2,1)$ and $n=3$, we specialize $(a_{ij})$, $(b_{ij})$ and $(A_{ij})$ for $F^{(3)}$ as follows:
\begin{align}\label{special}
(a_{ij})_{3\times 1}=\begin{pmatrix}
  0  \\
 -1 \\
 -2
\end{pmatrix}, \quad (b_{ij})_{3\times 1}=\begin{pmatrix}
  1 \\
  1 \\
  1
\end{pmatrix},
\end{align}
\begin{align*}
(A_{ij})_{2\times 2}=\begin{pmatrix}
  A_{12} & 0 \\
  A_{13} & A_{23}
\end{pmatrix}=\begin{pmatrix}
  \lambda_1-\lambda_2+2-1 & 0 \\
  \lambda_1-\lambda_3+3-1 & \lambda_2-\lambda_3+3-2 \\
\end{pmatrix}=\begin{pmatrix}
  2 & 0 \\
  4 & 2 \\
\end{pmatrix}.
\end{align*}
From these matrices and putting $z_{11}=z_{21}=\cdots=z_{n1}=1$, we obtain the subsequent value using~(\ref{FSST}).
\begin{align*}
&F^{(3)} \left(\begin{pmatrix}
  A_{12} & 0 \\
  A_{13} & A_{23}
\end{pmatrix} \middle| \begin{pmatrix}
  0 \\
  -1 \\
  -2
\end{pmatrix} \middle| \begin{pmatrix}
  1 \\
  1 \\
  1
\end{pmatrix} \middle| \begin{pmatrix}
  1 \\
  1 \\
  1
\end{pmatrix} \right)\\
&=\sum_{k_1=0}^0\sum_{k_2=0}^1\sum_{k_3=0}^2\frac{A_{12}+k_1-k_2}{A_{12}}\frac{A_{13}+k_1-k_3}{A_{13}}\frac{A_{23}+k_2-k_3}{A_{23}}\frac{(0)_{k_1}(-1)_{k_2}(-2)_{k_3}}{(1)_{k_1}(1)_{k_2}(1)_{k_3}}\\
&=\frac{2}{2\cdot 4\cdot 2}=\frac{1}{8}.
\end{align*}
We find that the specialization~(\ref{special}) does not satisfy the second and the third conditions of~(\ref{q_4}).

\end{rem}

\section{Conclusions}\label{S4}
In this paper, we gave some special values of Grothendieck polynomials $G_\lambda$. For single row and single column $\lambda$, the value $G_{\lambda}(1,\dots,1\mid \beta)$ is expressed by the Gauss' hypergeometric functions ${}_2F_1$ (Proposition \ref{p1}, \ref{p2}). Also, the value $G_\lambda(1,\dots,1 \mid \beta)$ for general $\lambda$ is given by the Holman's hypergeometric functions $F^{(n)}$ (Theorem~\ref{M2}). Theorem ~\ref{M2} is derived by considering the principal specialization of the Grothendieck polynomials (Theorem~\ref{M1}). Using these special values, we gave explicit formulas for the number of set-valued tableaux for shapes $\lambda$ with $n$ variables (Corollary~\ref{sr},~\ref{sc} and~\ref{cor1}).
As an application, we obtained a summation formula for  $F^{(n)}$ (Theorem~\ref{M3}).


There are various generalizations of Grothendieck polynomials $G_\lambda$. For example, canonical Grothendieck polynomials with another parameter $\alpha$ were introduced by Yeliussizov~\cite{[Yel17]}.
The refined canonical Grothendieck polynomial~(\ref{refG}) is an extension of these polynomials. 
Since (refined) canonical Grothendieck polynomials are defined as a bi-alternant formula similar to~(\ref{W}), we expect our results can be extended to these polynomials.


Besides, there is no bi-alternant formula for skew Grothendieck polynomials $G_{\lambda/\mu}$~\cite{[Buc02]} which are defined as a tableaux sum formula in a similar manner to~(\ref{GG}). 
If a bi-alternant formula for $G_{\lambda/\mu}$ is obtained, then our results can be also generalized.
Furthermore, there is another skew version of Grothendieck polynomials $G_{\lambda/\!/\mu}$~\cite{[Buc02]}.
These polynomials can be often better behaved than the combinatorial description.
For example, we can find some details about $G_{\lambda/\!/\mu}$ in~\cite{[IKS22]}.
It is interesting to generalize our results for $G_{\lambda/\mu}$ and $G_{\lambda/\!/\mu}$. 

Finally, the special values of Grothendieck polynomials $G_\lambda$ are given by Holman's hypergeometric functions $F^{(n)}$. This function $F^{(n)}$ was introduced in the representation theory of Lie group $U(n+1)$ and $SU(n+1)$.
It is interesting and important to interrupt $G_\lambda$ in the context of $U(n+1)$ or $SU(n+1)$.

\section*{Acknowledgments} 

We would like to thank Yasuhiko Yamada for helpful discussions and constant encouragement. 
We deeply appreciate Hirofumi Yamada and Shunya Adachi for useful discussions and motivating us.
We are also thankful to Travis Scrimshaw for giving informative comments about Proposition~\ref{AA}, which is the key point of Theorem~\ref{M3}.
This work was supported by JST SPRING, Grant Number JPMJSP2148, and JSPS KAKENHI Grant Number 22H01116.

\noindent{\sc Department of Mathematics, Graduate School of Science, Kobe University}

(Taikei Fujii) {\it E-mail address}: {\tt tfujii@math.kobe-u.ac.jp}

(Takahiko Nobukawa) {\it E-mail address}:  {\tt tnobukw@math.kobe-u.ac.jp}

(Tatsushi Shimazaki) {\it E-mail address}: {\tt tsimazak@math.kobe-u.ac.jp}

\end{document}